\documentclass{amsart}

\usepackage{amssymb} 
\usepackage{amsmath} 
\usepackage{latexsym} 
\usepackage{amsfonts}
\usepackage{amsthm}  
\usepackage{mathrsfs}
\usepackage{verbatim} 
\usepackage{stmaryrd} 
\usepackage{color}  
\usepackage[pdftex,colorlinks=true, pdfstartview=FitV, linkcolor=blue, citecolor=blue, urlcolor=blue]{hyperref} 
\usepackage[all,2cell,ps]{xy}
\usepackage{tikz}
\usepackage{lscape} 


\numberwithin{equation}{section}


\theoremstyle{plain} 
\newtheorem{thm}{Theorem}[section]

\newtheorem{prop}[thm]{Proposition}
\newtheorem{cor}[thm]{Corollary}
\newtheorem{conj}[thm]{Conjecture}
\newtheorem*{conj*}{Conjecture}

\newtheorem{quest}[thm]{Question}

\theoremstyle{definition}
\newtheorem{defn}[thm]{Definition}
\newtheorem{example}[thm]{Example}  

\theoremstyle{remark}  
\newtheorem{rem}[thm]{Remark}

\newtheorem*{observ*}{Observation}

\newtheorem*{claim*}{Claim}

\newtheoremstyle{editorialNotes}
	{10mm} 
	{10mm}
	{\slshape}
	{-80pt}
	{\bfseries}
	{}
	{10mm}
	{}

\theoremstyle{editorialNotes}

\newcommand{\ds}{\displaystyle}
\newcommand{\ol}{\overline}

\newcommand{\vl}{\;\vert\;}

\newcommand{\C}{\mathbb{C}}

\newcommand{\kk}{\Bbbk}
\newcommand{\m}{\mathfrak{m}}

\newcommand{\bb}{\mathbf{b}}

\newcommand{\CM}{Cohen-Macaulay\ }

\newcommand{\ls}{\leqslant}
\newcommand{\gs}{\geqslant}

\newcommand{\SuSp}{super-stretched}

\newcommand{\lst}[2]{#1_1,\dots,#1_{#2}}


\newcommand{\dfn}[1]{\textsf{#1}\index{#1}}



\author[B. Stone]{Branden Stone}
\thanks{The author was partially funded by the NSF grant, Kansas Partnership for Graduate Fellows in K-12 Education (DGE-0742523).}

\title[Non-Gorenstein, graded countable Cohen-Macaulay type]{Non-Gorenstein isolated singularities of graded countable Cohen-Macaulay type}
\address{Branden Stone, Mathematics Program, Bard College/Bard Prison Initiative, P.O. Box 5000, Annandale-on-Hudson, NY 12504}
\email{bstone@bard.edu}


\begin{document}

\maketitle 
	
\begin{abstract}
	In this paper we show a partial answer the a question of C. Huneke and G. Leuschke (2003): Let $R$ be a standard graded \CM ring of graded countable \CM representation type, and assume that $R$ has an isolated singularity. Is $R$ then necessarily of graded finite \CM representation type? In particular, this question has an affirmative answer for standard graded non-Gorenstein rings as well as for standard graded Gorenstein rings of minimal multiplicity. Along the way, we obtain a partial classification of graded \CM rings of graded countable \CM type.
\end{abstract}


\section{Motivation and Introduction}\label{ch1}

	For motivational purposes, we first consider local \CM rings.  Throughout, a local \CM ring is said to have \dfn{finite \CM type} (respectively, \dfn{countable \CM type}) if it has only finitely (respectively, countably) many isomorphism classes of indecomposable maximal \CM modules.  

In \cite{auslander86}, M. Auslander showed that  local rings of finite \CM type must have an isolated singularity.  It was later shown in \cite{auslander89} that this was true for standard graded rings as well.  Concerning countable \CM type, it was shown in the excellent case by C. Huneke and G. Leuschke that a local ring of countable \CM type does not necessarily have an isolated singularity, but the singular locus has dimension at most one \cite{huneke03}(the graded version of this is found in \cite{mythesis}).  Given this result, it is natural to try and find a local \CM ring of countable (and infinite) \CM type with an isolated singularity.  Examining the known examples countable type rings with an isolated singularity, C. Huneke and G. Leuschke found they were actually finite type!  This led to the following question:
	\begin{quest}[\cite{huneke03}]\label{q1}
		Let $R$ be a complete local Cohen-Macaulay ring of countable \CM representation type, and assume that $R$ has an isolated singularity. Is $R$ then necessarily of finite \CM representation type?  
	\end{quest}
In this paper, we are mainly concerned with the graded version of this question (see Question \ref{q2}).  In Corollary \ref{main:cor}, we are able to show a positive answer for standard graded non-Gorenstein rings as well as for graded Gorenstein rings of minimal multiplicity.  The main strategy is to classify the rings of graded countable \CM type and compare this list with a the classification of graded finite \CM type in \cite{eisenbud88}. 

The sections of this paper break the problem up into cases based on the dimension of the ring, with Section \ref{sec:gor} discussing the issues surrounding Gorenstein rings.  The results are summarized in Section \ref{sec:sumup}.

\subsection{Preliminaries and known results}

We say that a ring $R$ is \dfn{standard graded}\label{def:stgrade} if, as an abelian group, it has a decomposition $R = \bigoplus_{i \gs 0} R_i$ such that $R_iR_j \subseteq R_{i+j}$ for all $i,j\gs 0$, $R = R_0[R_1]$, and $R_0$ is a field.  Further, we will always assume that a standard graded ring is Noetherian. Unless otherwise stated, we will denote $(R,\m,\kk)$ by the standard graded ring with $\m$ being the irrelevant maximal ideal, that is $\m = \sum_{i =1}^\infty R_i$, and $\kk := R_0 = R/\m$ being an uncountable algebraically closed field of characteristic zero.  We let $e(R)$ be the multiplicity of the maximal ideal $\m$.  When $R$ is a $d$-dimensional \CM ring, if $e(R)= \dim_\kk(\m/\m^2) - \dim R + 1 $ then $R$ is said to have \dfn{minimal multiplicity}.  In the case when $\kk$ is infinite, the following are equivalent: 
\begin{itemize}\label{min-mult-equiv} 
	\item[(i)] $R$ has minimal multiplicity;
	\item[(ii)] there exists a regular sequence $x_1,\ldots,x_d$ such that $\m^2 = (x_1,\ldots,x_d)\m$;
	\item[(iii)] the $h$-vector of $R$ is of the form $(1,n)$.
\end{itemize}
Here we define the \dfn{$h$-vector} of $R$ to be the sequence $\ds \left(\dim_\kk(R/(\lst x d))_i \right)_{i \gs 0}$ where $\lst x d$ is a homogeneous regular sequence of degree one.

A standard graded \CM ring $(R,\m,\kk)$ has \dfn{graded finite \CM type} (respectively, \dfn{graded countable \CM type}) if it has only finitely (respectively, countably) many indecomposable, maximal \CM modules up to a shift in degree.

The graded version of Question \ref{q1} is as follows.
\begin{quest}\label{q2}
	Let $R$ be a standard graded \CM ring of graded countable \CM representation type, and assume that $R$ has an isolated singularity. Is $R$ then necessarily of graded finite \CM representation type?  
\end{quest}

\begin{rem}\label{rem:local-to-graded}
	It is known that if the completion of a standard graded ring $R$ with respect to the maximal ideal has finite (respectively countable) type, then $R$ must have graded finite (respectively countable) type (see \cite[Prop 8 and 9]{auslander89} or \cite[Corollary 2.5]{stoneSS}).  Hence any affirmative answer to Question \ref{q1} can be passed to a positive answer of Question \ref{q2}.
\end{rem}

With this in mind, Question \ref{q2} has a positive answer if the ring is a hypersurface.  In particular, it was shown  in \cite{knorrer87,buchweitz87} that if $R$ is a complete hypersurface containing an algebraically closed field $\kk$ (of characteristic different from 2), then $R$ is of finite \CM type if and only if $R$ is the local ring of a simple hypersurface singularity in the sense of \cite{arnold74}.  For example, if we let $\kk = \C$, then $R$ is one of the complete ADE singularities over $\C$.  That is, $R$ is isomorphic to $\kk\llbracket x, y, z_2,\ldots, z_r\rrbracket/( f )$, where $f$ is one of the following polynomials:
\begin{align*}
(A_n): &\  x^{n+1}+y^2+z_2^2+\cdots+z_r^2 ,\  n \gs 1; \\
(D_n): &\  x^{n-1}+xy^2+z_2^2+\cdots+z_r^2 , \ n \gs 4; \\
(E_6): &\  x^4+y^3+z_2^2+\cdots+z_r^2; \\
(E_7): &\  x^3y+y^3+z_2^2+\cdots+z_r^2; \\
(E_8): &\  x^5+y^3+z_2^2+\cdots+z_r^2.
\end{align*}
It was further shown in \cite{buchweitz87} that a complete hypersurface singularity over an algebraically closed uncountable field $\kk$ has (infinite) countable \CM type if and only if it is isomorphic to one of the following:
\begin{align}
	(A_\infty): &\ \kk\llbracket x, y, z_2,\ldots, z_r\rrbracket /( y^2+z_2^2+\cdots+z_r^2 ); \label{eq1}\\ 
	(D_\infty): &\ \kk\llbracket x, y, z_2,\ldots, z_r\rrbracket /( xy^2+z_2^2+\cdots+z_r^2 ). \label{eq2}
\end{align}	
As both $(A_\infty)$ and $(D_\infty)$ are non-isolated singularities, if a hypersurface has an isolated singularity and is countable type then it must have finite type.  

In 2011, R. Karr and R. Wiegand \cite[Theorem 1.4]{karr11} showed the one dimensional case as well. This was under the assumption that the integral closure of the ring $R$ is finitely generated as an $R$-module.  We recover these results in Section \ref{dim1}.

\section{Zero Dimensional Rings}

	It is well known that a zero dimensional equicharacteristic local ring $R$ is a hypersurface if and only if $R$ is of finite \CM type \cite[Satz 1.5]{herzog78}.  We show the graded countable analog to this statement in Proposition \ref{prop:gct-0-dim}.

\begin{prop}\label{prop:gct-0-dim}
	Let $(R,\m,\kk)$ be a 0-dimensional standard graded \CM ring. Further assume that $\kk$ is an uncountable field. Then the following are equivalent:
	\begin{enumerate}
		\item\label{gct0dim1} $R$ is of graded finite \CM type;
		\item\label{gct0dim2} $R$ is of graded countable \CM type;
		\item\label{gct0dim3} $R$ is a hypersurface ring.
	\end{enumerate}
\end{prop}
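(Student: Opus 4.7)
The plan is to run the cycle $(1) \Rightarrow (2) \Rightarrow (3) \Rightarrow (1)$; the first implication is tautological. For $(3) \Rightarrow (1)$, observe that since $R$ is 0-dimensional and standard graded, a hypersurface presentation forces $R \cong \kk[x]/(x^n)$ for some $n \geq 1$: the presenting polynomial ring must itself be 1-dimensional and so equal to $\kk[x]$, and the only homogeneous nonzerodivisors in $\kk[x]$ yielding a 0-dimensional quotient are powers of $x$. Over $\kk[x]/(x^n)$, the classical structure theorem for finitely generated graded modules gives that every such module is a direct sum of shifts of $\kk[x]/(x^i)$ with $1 \leq i \leq n$, yielding finitely many graded indecomposables up to shift and hence graded finite CM type.

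All the content is in $(2) \Rightarrow (3)$, which I prove by contrapositive. Assume $R$ is not a hypersurface. Since $\dim R = 0$, this forces the embedding dimension $n = \dim_\kk \m/\m^2$ to be at least $2$. Because $R$ is standard graded with $\m^2$ concentrated in degrees $\geq 2$, there is a composite surjection of standard graded $\kk$-algebras
\[
R \twoheadrightarrow R/\m^2 \cong \kk[x_1,\dots,x_n]/(x_1,\dots,x_n)^2 \twoheadrightarrow R'' := \kk[x,y]/(x,y)^2.
\]
Any graded $R''$-module pulls back to a graded $R$-module along this surjection, with indecomposability and (non)isomorphism preserved, so it suffices to exhibit an uncountable family of pairwise non-isomorphic indecomposable graded $R''$-modules up to shift.

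For each $\alpha \in \kk$ I take the length-2 graded $R''$-module $M_\alpha$ with basis $e_2$ in degree 0 and $e_1$ in degree 1, subject to $x e_2 = e_1$ and $y e_2 = \alpha e_1$ (the relations $\m e_1 = 0$ are automatic for degree reasons). Indecomposability is immediate: any graded direct decomposition of $M_\alpha$ would split off the socle $\kk e_1$, forcing the complementary summand $\kk e_2$ to be an $R''$-submodule, which contradicts $x e_2 = e_1 \neq 0$. A degree-zero graded homomorphism $M_\alpha \to M_\beta$ is determined by $e_2 \mapsto d e_2'$ and $e_1 \mapsto a e_1'$, and the defining relations force $a = d$ and $a(\alpha - \beta) = 0$, so any graded isomorphism forces $\alpha = \beta$. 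Since each $M_\alpha$ has Hilbert series $1 + t$, no nontrivial shift can identify two of them either, so $\{M_\alpha\}_{\alpha \in \kk}$ yields uncountably many graded isomorphism classes up to shift. The one genuine subtlety—and the real reason the statement needs an uncountable residue field—is ensuring that the parameter $\alpha$ truly supplies uncountably many, rather than merely infinitely many, non-isomorphic indecomposables; once that is in hand the rest is routine graded bookkeeping.
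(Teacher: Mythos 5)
Your proof is correct, but both nontrivial implications take a genuinely different route from the paper's. For $(2) \Rightarrow (3)$, the paper stays inside $R$ itself: it picks two linear basis elements $a,b$ of $\m/\m^2$, invokes \cite[Lemma 4.1]{stoneSS} to produce uncountably many distinct homogeneous ideals $I_\alpha = (a+\alpha b)R$, and distinguishes the cyclic maximal \CM modules $R/I_\alpha$ by their annihilators, which are unaffected by shifts. You instead push everything down to $R'' = \kk[x,y]/(x,y)^2$ via $R \twoheadrightarrow R/\m^2 \twoheadrightarrow R''$ and hand-build the length-two modules $M_\alpha$ (which are exactly $R''/(y-\alpha x)$), verifying pairwise non-isomorphism by a direct degree-zero Hom computation and ruling out shift identifications via the Hilbert series. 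Your version is self-contained --- no external lemma is needed, and the explicit computation $a(\alpha-\beta)=0$ replaces the annihilator trick --- at the cost of the (easy, but correctly flagged) verifications that restriction of scalars along a graded surjection preserves indecomposability and isomorphism classes, and the tacit fact that in dimension zero every finitely generated graded module is automatically maximal \CM, which your reduction uses. For $(3) \Rightarrow (1)$, the paper passes to the $\m$-adic completion, cites \cite[Satz 1.5]{herzog78}, and descends via Remark \ref{rem:local-to-graded}, whereas you identify $R \cong \kk[x]/(x^n)$ directly and apply the graded structure theorem over $\kk[x]$; your route is more elementary and proves graded finite type without any appeal to the local theory or to descent from the completion. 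One cosmetic caveat: your closing sentence presents the uncountability of the family as a subtlety still to be secured, but your own Hom computation already settles it --- distinct $\alpha$ give non-isomorphic $M_\alpha$, so the family realizes exactly $|\kk|$ isomorphism classes up to shift, and uncountability of $\kk$ does the rest.
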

\begin{proof}
	The implication \eqref{gct0dim1} implies \eqref{gct0dim2} is straight forward.  To show \eqref{gct0dim2} implies \eqref{gct0dim3}, assume that $R$ is not a hypersurface.  Thus there must be two linear forms $a,b \in \m \setminus \m^2$ that are basis elements of $\m/\m^2$.  By \cite[Lemma 4.1]{stoneSS}, there are uncountably many distinct homogeneous ideals $\{I_\alpha\}_{\alpha\in \kk}$ in $R$. In this context, we have that $I_\alpha = (a+\alpha b)R$.  Consider the graded indecomposable maximal \CM modules $\{R/I_\alpha\}_{\alpha\in \kk}$.  As each of these modules have different annihilators, we know they are not isomorphic.  A contradiction as we assumed that $R$ was of graded countable type.

	To prove that \eqref{gct0dim3} implies \eqref{gct0dim1}, we consider the $\m$-adic completion of $R$ and then apply \cite[Satz 1.5]{herzog78} to see that the completion is of finite \CM type.  By Remark \ref{rem:local-to-graded}, we know that $R$ also has graded finite \CM type.
\end{proof}

We thus have a complete classification of graded countable \CM type for zero dimensional standard graded rings.  Further, Proposition \ref{prop:gct-0-dim} gives a positive answer to Question \ref{q2} for zero dimensional standard graded rings with uncountable residue field.

\section{One Dimensional Rings}\label{dim1}

In the one-dimensional case, Question \ref{q2} has a positive answer as shown by R. Karr and R. Wiegand \cite[Theorem 1.4]{karr11}. In this section, we examine the Drozd-Ro\u\i ter conditions and give a classification of one dimensional reduced standard graded rings of graded countable \CM type. Thus retrieving the result of R. Karr and R. Wiegand.

\subsection{Finite Type and the Drozd-Ro\u\i ter conditions}

As detailed by N. Cimen, R. Wiegand, and S. Wiegand \cite{cimen}, if $(R,\m, \kk)$ is a one dimensional, Noetherian, reduced, local \CM ring such that the integral closure of $R$, say $S$, is finitely generated as an $R$-module, then we know precisely when $R$ has finite Cohen-Macaulay type.  This happens when the following conditions occur:
	\begin{enumerate}
		\item[\bf DR1] $S$ is generated by $3$ elements as an $R$-module;
		\item[\bf DR2] the intersection of the maximal $R$-submodules of $S/R$ is cyclic as an $R$-module.
	\end{enumerate}
These conditions are called the \dfn{Drozd-Ro\u\i ter conditions}. It was further shown in \cite[Proposition 1.12]{cimen} that the Drozd-Ro\u\i ter conditions are equivalent to the following:
	\begin{align*}
		\tag{dr1} \dim_\kk(S/\m S) &\ls 3;\\ 
		\tag{dr2} \dim_\kk\left(\frac{R +\m S}{R + \m^2S}\right) &\ls 1.
	\end{align*}

In order to have a different grasp of what it means to satisfy the Drozd-Ro\u\i ter conditions, we state another set of equivalent conditions.  This result is stated in the next proposition where $\lambda$ represents length as an $R$-module, and $\ol *$ is the integral closure of ideals.

\begin{prop}\label{prop:ssDR}
	Let $(R,\m,\kk)$ be a one-dimensional, Noetherian, reduced, local \CM ring with finite integral closure and uncountable residue field $\kk$. Let $x$ be a minimal reduction of the maximal ideal $\m$.  The Drozd-Ro\u\i ter conditions are equivalent to the following:
	\begin{align}
		 e(R) &\ls 3; \label{eq:ssDR1} \\ 
		 \lambda(\ol{\m^2}/x\m) &\ls 1. \label{eq:ssDR2}
	\end{align}	
\end{prop}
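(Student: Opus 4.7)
The plan is to split the biconditional into two parallel equivalences, (dr1)$\Leftrightarrow$\eqref{eq:ssDR1} and (dr2)$\Leftrightarrow$\eqref{eq:ssDR2}, each realized by a single length identity: $\dim_\kk(S/\m S)=e(R)$ for the first pair and $\dim_\kk((R+\m S)/(R+\m^2 S))=\lambda(\ol{\m^2}/x\m)$ for the second. Two preliminary facts underlie the argument. First, $\m S=xS$ as $S$-ideals: since $x$ is a minimal reduction, there is an $n$ with $\m^{n+1}=x\m^n$, and extending to $S$ gives $(\m S)(\m^n S)=(xS)(\m^n S)$, which after canceling the invertible $S$-ideal $\m^n S$ (invertible because $S$ is a product of DVRs) yields $\m S=xS$. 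Second, $\ol{I}=IS\cap R$ for every ideal $I\subseteq R$; this is the standard description of integral closure for reduced rings with finite integral closure, and produces $\m S\cap R=\m$ (using also that the maximal ideal is integrally closed) and $\m^2 S\cap R=\ol{\m^2}$.

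For the first equivalence I apply the Snake Lemma to the short exact sequence $0\to R\to S\to S/R\to 0$ with vertical multiplication-by-$x$ maps. Since $x$ is a nonzerodivisor on both $R$ and $S$, and since multiplication by $x$ on the finite-length $R$-module $S/R$ has kernel and cokernel of equal length, the Snake Lemma collapses to $\lambda(R/xR)=\lambda(S/xS)$. Samuel's formula $e(R)=\lambda(R/xR)$ (valid since $x$ is a minimal reduction of $\m$ in the one-dimensional \CM ring $R$), together with $xS=\m S$ and the observation that $S/\m S$ is killed by $\m$, then produces $e(R)=\lambda(S/xS)=\dim_\kk(S/\m S)$, which gives (dr1)$\Leftrightarrow\eqref{eq:ssDR1}$ immediately.

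For the second equivalence, the isomorphism theorem applied to the inclusions $R\subseteq R+\m^2 S\subseteq R+\m S$, together with $\m S\cap R=\m$ and $\m^2 S\cap R=\ol{\m^2}$, identifies $(R+\m S)/(R+\m^2 S)$ with $(\m S/\m)/(\m^2 S/\ol{\m^2})$. Its $\kk$-dimension is therefore $\lambda(\m S/\m)-\lambda(\m^2 S/\ol{\m^2})$, and I compute each summand by inserting the principal submodules $xR\subseteq \m$ and $x^2R\subseteq \ol{\m^2}$. Multiplication by $x^{-1}$ and $x^{-2}$ gives $\lambda(\m S/xR)=\lambda(\m^2 S/x^2R)=\lambda(S/R)$; the short exact sequence $0\to \m/xR\to R/xR\to R/\m\to 0$ gives $\lambda(\m/xR)=e(R)-1$, and multiplication by $x$ then yields $\lambda(x\m/x^2R)=e(R)-1$; finally the filtration $x^2R\subseteq x\m\subseteq \ol{\m^2}$ expresses $\lambda(\ol{\m^2}/x^2R)$ as $\lambda(\ol{\m^2}/x\m)+e(R)-1$. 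Assembling these length identities produces $\lambda(\m S/\m)=\lambda(S/R)-(e(R)-1)$ and $\lambda(\m^2 S/\ol{\m^2})=\lambda(S/R)-\lambda(\ol{\m^2}/x\m)-(e(R)-1)$, whose difference is precisely $\lambda(\ol{\m^2}/x\m)$. The main obstacle is bookkeeping: the quantities $\lambda(S/R)$ and $e(R)-1$ appear in both length sums and must cancel cleanly for the desired equality to emerge; once that is verified, (dr2)$\Leftrightarrow\eqref{eq:ssDR2}$ follows and the proposition is proved.
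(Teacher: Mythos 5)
Your proposal is correct, and its first half coincides with the paper's argument almost verbatim: both establish $\m S = xS$ (you by cancelling the invertible ideal $\m^n S$ in the Dedekind product $S$, the paper by noting $xS$ is a reduction of $\m S$ and principal ideals of $S$ are integrally closed), then apply the snake lemma to multiplication by $x$ on $0 \to R \to S \to S/R \to 0$ to get $\lambda(R/xR) = \lambda(S/xS)$, and conclude $e(R) = \dim_\kk(S/\m S)$. In the second half you genuinely diverge. The paper reduces $B := \dim_\kk\bigl((R+\m S)/(R+\m^2 S)\bigr)$ to $\lambda\bigl(\m S/(\m^2 S + \m)\bigr)$ and then threads through three short exact sequences --- on $\m S/\m^2 S$, on $R/xR$ via the auxiliary ideal $\ol{\m^2}+xR$, and on $R/\ol{\m^2}$ --- reusing $\lambda(R/xR)=\lambda(S/xS)$ so the unknown $\lambda\bigl(R/(\ol{\m^2}+xR)\bigr)$ cancels. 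You instead anchor everything at $\lambda(S/R)$: the scaling isomorphisms $\m S/xR = xS/xR \simeq S/R$ and $\m^2 S/x^2R = x^2S/x^2R \simeq S/R$, together with the filtrations $xR \subseteq \m \subseteq \m S$ and $x^2R \subseteq x\m \subseteq \ol{\m^2} \subseteq \m^2 S$, compute $\lambda(\m S/\m)$ and $\lambda(\m^2 S/\ol{\m^2})$ outright, and their difference is $\lambda(\ol{\m^2}/x\m)$; I verified the modular-law identification $(R+\m S)/(R+\m^2 S) \simeq \m S/(\m^2 S+\m) \simeq (\m S/\m)\big/(\m^2 S/\ol{\m^2})$ and the arithmetic, and both are sound. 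Your version is more symmetric and avoids the auxiliary ideal, at the mild cost of explicitly invoking $\lambda(S/R) < \infty$ --- true since $R$ is reduced, so $R_\p = S_\p$ at every minimal prime $\p$, a fact the paper also needs, tacitly, for $\lambda(K)=\lambda(C)$. Two small qualifications: your blanket claim $\ol{I} = IS \cap R$ for \emph{every} ideal should be restricted to ideals containing a nonzerodivisor (one inclusion is the determinant trick with the faithful finite module $S$, the other that such extended ideals are integrally closed in $S$), but you apply it only to $\m$ and $\m^2$, which contain $x$ and $x^2$, so nothing breaks; and, like the paper, you should note that $\m$ kills $(R+\m S)/(R+\m^2 S)$, so its length equals its $\kk$-dimension.
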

\begin{proof}
	To show that \eqref{eq:ssDR1} holds, we will show that $e(R) = \dim_\kk(S/\m S)$ where $S$ is the integral closure of $R$.  As $x$ is a reduction of $\m$, we know that $xS$ is also a reduction of $\m S$.  But this holds if and only if $\m S \subseteq \ol{xS}$.  As principal ideals are integrally closed in $S$, we have that
	\[
		xS \subseteq \m S \subseteq \ol{xS} = xS,
	\]
and hence $xS = \m S$.  By assumption, $S$ is finitely generated as an $R$-module.  Therefore we have that the map $S \to S$  defined by multiplication by $x$ is an injection.  Hence, we have the following commutative diagram in Figure \ref{fig}.

\begin{figure}[h!]
\[
	\xymatrixrowsep{5mm}
	\xymatrixcolsep{8mm}
	\xymatrix
		{
			& & 0 \ar[d] & 0 \ar[d] & K \ar[d]  & &  \\
			& 0 \ar[r] & R \ar[r]^{\cdot x} \ar[d] & R \ar[r] \ar[d] & R/xR \ar[r] \ar[d] & 0 &\\
			& 0 \ar[r] & S \ar[r]^{\cdot x} \ar[d] & S \ar[r] \ar[d] & S/xS \ar[r] \ar[d] & 0 &\\
			& & C'  & C'  & C &  &\\
		}
\]
\caption{\label{fig} A commutative diagram where $K$, $C'$, and $C$ are the respective kernel and cokernels.}
\end{figure}

Consider the exact sequence in the right side of Figure \ref{fig}, 
\begin{equation}\label{snake}
	\xymatrixrowsep{5mm}
	\xymatrixcolsep{8mm}
	\xymatrix
		{
			0 \ar[r] & K \ar[r] & R/xR \ar[r] & S/xS \ar[r] & C \ar[r] & 0.
		}
\end{equation}
Notice by the Snake Lemma applied to Figure \ref{fig} that $\lambda(K) = \lambda(C)$; here $\lambda$ represents length as $R$-modules.  
As $C$ and $K$ have the same length, \eqref{snake} shows that $R/xR$ and $S/xS$ also have the same length.  Since $x$ is a minimal reduction of the maximal ideal, we know that $e(R) = \lambda(R/xR)$.  Therefore,
\[
	e(R) = \lambda(R/xR) = \lambda(S/xS) = \lambda(S/\m S) = \dim_\kk(S/\m S).
\]	
	In order to show \eqref{eq:ssDR2}, first notice that
	\begin{equation}\label{eq:iso-ssDR}
		\frac{R+\m S}{R+\m^2 S} \simeq \frac{\m S}{\m^2S + (R\cap \m S)} \simeq \frac{\m S}{\m^2S + \m} \simeq \frac{xS}{x^2S + \m}.
	\end{equation}
For simplicity, we define $B$ as follows,
	\[
		B := \dim_\kk\left(\frac{R +\m S}{R + \m^2S}\right) = \lambda\left(\frac{xS}{x^2S + \m}\right).
	\]
We now consider the short exact sequence
	\[
		\xymatrix
			{
				0 \ar[r] & \ds \frac{\m^2S + \m}{\m^2S} \ar[r] & \ds \frac{\m S}{\m^2S} \ar[r] & \ds \frac{\m S}{\m^2S + \m}  \ar[r] & 0.
			}
	\]
Rewriting the two terms on the left gives us
	\begin{align}
		\frac{\m^2S + \m}{\m^2S} &\simeq \frac{\m}{\m^2 S \cap \m} \simeq \m /\ol{\m^2}; \label{eq:iso-ssDR2}\\
		\frac{\m S}{\m^2S} &\simeq \frac{S}{\m S} = \frac{S}{xS}. \label{eq:iso-ssDR3}
	\end{align}
Combining \eqref{eq:iso-ssDR2} and \eqref{eq:iso-ssDR3} with the above short exact sequence yields
	\begin{equation}\label{eq:length2}
		\lambda\left(\frac{S}{xS}\right) = \lambda\left(\m/\ol{\m^2}\right) + B.
	\end{equation}
On the other hand, consider the following short exact sequence 		
	\begin{equation}\label{eq:ses-length}
		\xymatrix
			{
				0 \ar[r] & \ds \frac{\ol{\m^2} + xR}{xR} \ar[r] & \ds \frac{R}{xR} \ar[r] & \ds \frac{R}{\ol{\m^2} + xR} \ar[r] & 0,
			}
	\end{equation}
along with the isomorphisms
	\begin{equation}\label{equal}
		\frac{\ol{\m^2} + xR}{xR} \simeq \frac{\ol{\m^2}}{xR\cap\ol{\m^2} } = \frac{\ol{\m^2}}{x\m}.
	\end{equation}
Note that the equality in \eqref{equal} can be justified as follows.  Since $xS = \m S$, we know that $\ol{\m^2} = \m^2 S\cap R = x^2S \cap R$.  If $y \in xR\cap\ol{\m^2}$, then $y = xr \in \ol{\m^2} = x^2S\cap R$.  This forces $r \in xS \cap R = \m$.  Hence $y \in x\m$.  Equality follows as $x\m \subseteq xR\cap\ol{\m}$.

Computing length in \eqref{eq:ses-length} gives us
	\[
		\lambda\left(\frac{R}{xR}\right) =  \lambda\left( \frac{\ol{\m^2}}{x\m}\right) +  \lambda\left( \frac{R}{\ol{\m^2} + xR}\right).
	\]

We can repeat the above steps with the following short exact sequence and isomorphisms:
	\[
		\xymatrix
			{
				0 \ar[r] & \ds  \frac{\ol{\m^2} + xR}{\ol{\m^2}} \ar[r] & \ds \frac{~R~}{\ol{\m^2}} \ar[r] & \ds \frac{R}{\ol{\m^2} + xR} \ar[r] & 0;
			}
	\]
	\[
		\frac{\ol{\m^2} + xR}{\ol{\m^2}} \simeq \frac{xR}{\ol{\m^2}\cap xR} \simeq \frac{xR}{\m xR} \simeq \frac{R}{\m}.
	\]
Once again, if we compute the length, we have that
	\begin{equation}\label{eq:length3}
		\lambda\left(\frac{~R~}{\ol{\m^2}}\right) =  1 +  \lambda\left( \frac{R}{\ol{\m^2} + xR}\right)
	\end{equation}
Combining \eqref{eq:length2} and \eqref{eq:length3} with the fact that $\lambda\left(\frac{R}{xR}\right) = \lambda\left(\frac{S}{xS}\right)$ and $\lambda\left(\frac{~\m~}{\ol{\m^2}}\right) = \lambda\left(\frac{~R~}{\ol{\m^2}}\right) - 1$, we have
	\begin{align*}
		\lambda\left( \frac{\ol{\m^2}}{x\m}\right) +  \lambda\left( \frac{R}{\ol{\m^2} + xR}\right) & = \lambda\left(\frac{~\m~}{\ol{\m^2}}\right) + B \\
					&= \lambda\left(\frac{~R~}{\ol{\m^2}}\right) - 1 + B \\
					&= 1 +  \lambda\left( \frac{R}{\ol{\m^2} + xR}\right) - 1 + B.
	\end{align*}
Simplifying we see that $\ds \lambda\left( \frac{\ol{\m^2}}{x\m}\right) = B$.  We now have
	\begin{align*}
		e(R) & = \dim_\kk(S/\m S); \\
		\lambda\left( \frac{\ol{\m^2}}{x\m}\right) & = \dim_\kk\left(\frac{R +\m S}{R + \m^2S}\right).
	\end{align*}
Applying (dr1) and (dr2) yields the desired result.	
\end{proof}

Given Proposition \ref{prop:ssDR}, we can construct a couple of examples.  

\begin{example}
	Consider the ring $R = \kk\llbracket t^3,t^7 \rrbracket$. This is a one-dimensional domain with $e(R)=3$.  The element $t^3$ is a minimal reduction of the maximal ideal $(t^3,t^7)R$.  If we compute the length, we see that $\lambda \left( \ol{\m^2} /t^3\m \right)  = 2$.  Hence, by Proposition \ref{prop:ssDR}, we have that $R$ is not of finite type.
\end{example}

\begin{example}
	Let $R = \kk\llbracket x,y\rrbracket/(x^3y-xy^3)$.  This ring is one-dimensional and reduced.  If we compute the multiplicity, we find that $e(R) = 4$.  Thus, we immediately have from Proposition \ref{prop:ssDR} that $R$ is not of finite type.  Computing the length none-the-less, we find that $\lambda\left(\ol{\m^2}/(x+2y)\m\right)  = 1$.  
\end{example}

\subsection{Graded Countable Type}

	In \cite[Theorem 5.3]{stoneSS}, one dimensional standard graded rings of graded countable type were shown to either be of minimal multiplicity or a hypersurface.  Turning to the case of minimal multiplicity, we find the following.

\begin{thm}\label{thm:1n}
	Let $(R,\m,\kk)$ be a standard graded one dimensional \CM ring with uncountable residue field $\kk$. If the $h$-vector of $R$ is $(1,n)$ with $n \gs 3$, then $R$ is not of graded countable \CM type.
\end{thm}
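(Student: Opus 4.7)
Since $R$ has $h$-vector $(1,n)$, $R$ has minimal multiplicity, so we can choose a linear nonzerodivisor $x \in R_1$ with $\m^2 = x\m$. Set $\bar R := R/xR$: this is a zero-dimensional standard graded Cohen-Macaulay ring with $\bar\m^2 = 0$ and $\dim_\kk \bar\m = n \geq 3 > 1$, so $\bar R$ is not a hypersurface. Proposition~\ref{prop:gct-0-dim} then tells us $\bar R$ is not of graded countable Cohen-Macaulay type; more precisely, examining its proof (via \cite[Lemma~4.1]{stoneSS}) furnishes an uncountable family of pairwise distinct principal graded ideals $J_\alpha := (\bar y_1 + \alpha \bar y_2)\bar R$ for $\alpha \in \kk$, where $\bar y_1, \bar y_2 \in \bar\m$ are fixed linearly independent linear forms.

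The plan is to lift this $\bar R$-family to an $R$-family of MCM modules and then separate the modules by a graded isomorphism invariant read off modulo $x$. For each $\alpha$, define $I_\alpha := \pi^{-1}(J_\alpha) = (x,\, y_1 + \alpha y_2)R$, where $\pi : R \twoheadrightarrow \bar R$ is the quotient map. Each $I_\alpha$ is a graded ideal containing the nonzerodivisor $x$, so it is torsion-free, and in dimension one this is the same as being maximal Cohen-Macaulay.

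The main step is to show that the modules $\{I_\alpha\}_{\alpha \in \kk}$ are pairwise non-isomorphic as graded $R$-modules. Applying $-\otimes_R \bar R$ to $0 \to I_\alpha \to R \to R/I_\alpha \to 0$ and using $xR \subseteq I_\alpha$ together with the $x$-torsion-freeness of $I_\alpha$, a short diagram chase yields the natural short exact sequence of graded $\bar R$-modules
\[
    0 \to R/I_\alpha \to I_\alpha/xI_\alpha \to J_\alpha \to 0.
\]
A direct computation then identifies $\ann_{\bar R}(I_\alpha/xI_\alpha)$ with $J_\alpha$: the hypothesis $n \geq 3$ enters here by providing enough independent linear directions to normalize the minimal-multiplicity relations $y_iy_j = xL_{ij}$ so that the quadratic contribution $z_\alpha^2/x$ (well-defined modulo $x$ since $z_\alpha^2 \in \m^2 = x\m$) lies in $(x, y_1+\alpha y_2)R$, ensuring that all of $J_\alpha$ genuinely annihilates $I_\alpha/xI_\alpha$. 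Any isomorphism $I_\alpha \cong I_\beta$ of graded $R$-modules would descend to a graded $\bar R$-isomorphism $I_\alpha/xI_\alpha \cong I_\beta/xI_\beta$, forcing $J_\alpha = J_\beta$ as ideals of $\bar R$, hence $\alpha = \beta$.

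Having produced uncountably many pairwise non-isomorphic graded MCM modules $I_\alpha$, the conclusion follows from graded Krull-Schmidt: if $R$ were of graded countable Cohen-Macaulay type, then the finite direct sums (with shifts) of the at most countably many graded indecomposable MCM modules would exhaust only countably many isomorphism classes of graded MCM $R$-modules, contradicting what we have produced. The most delicate point of this plan is the annihilator computation, where the quadratic structure on $\bar\m$ a priori depends on $R$, and it is exactly the freedom afforded by having $n \geq 3$ linear directions in $\bar\m$ that allows one to choose $y_1, y_2$ so that the invariant $\ann_{\bar R}(I_\alpha/xI_\alpha)$ faithfully recovers $\alpha$.
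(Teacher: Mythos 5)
Your setup coincides with the paper's (the same pencil of ideals $I_\alpha=(x,\,y_1+\alpha y_2)$, MCM since they contain the nonzerodivisor $x$), and your exact sequence $0 \to R/I_\alpha \to I_\alpha/xI_\alpha \to J_\alpha \to 0$ is correct, but the central step --- the identity $\ann_{\bar R}(I_\alpha/xI_\alpha)=J_\alpha$ --- is a genuine gap, and as stated it is false for general $R$. Writing $z_\alpha = y_1+\alpha y_2$, one always has $\ann_{\bar R}(I_\alpha/xI_\alpha)\subseteq J_\alpha$ (since $r x\in xI_\alpha$ forces $r\in I_\alpha$), with equality if and only if $z_\alpha^2\in xI_\alpha$, i.e.\ the linear form $z_\alpha^2/x$ lies in $\kk x+\kk z_\alpha$. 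Setting $b(z,z'):=\overline{zz'/x}\in\bar\m_1$, your normalization claim amounts to finding a $2$-plane $V=\kk y_1+\kk y_2$ with $b(z_\alpha,z_\alpha)\in\kk\bar z_\alpha$ for \emph{every} $\alpha$; comparing coefficients of powers of $\alpha$ forces $b(y_1,y_1)\equiv c_0 y_1$, $2b(y_1,y_2)\equiv c_1y_1+c_0y_2$, $b(y_2,y_2)\equiv c_1y_2$ modulo $x$. This demands an entire pencil of ``eigenlines'' of the quadratic map $z\mapsto b(z,z)$, and for generic quadratic data (which does occur among one-dimensional standard graded rings with $h$-vector $(1,n)$) the lines with $b(z,z)\wedge z=0$ form a finite set, so no such $V$ exists --- no matter how $y_1,y_2$ are chosen. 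Having $n\gs 3$ independent directions does not help here; if anything, more directions make the eigenline condition harder. When the condition fails, $\ann_{\bar R}(I_\alpha/xI_\alpha)$ is typically $0$ in degree one and carries no information about $\alpha$, so the invariant cannot separate the $I_\alpha$, and your descent argument collapses.

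For contrast, the paper uses the hypothesis $n\gs 3$ in the opposite way. It supposes a graded degree-zero isomorphism $I_\alpha\simeq I_\beta$, which is given by a $2\times 2$ scalar matrix $(\delta_i)$, and feeds the resulting relation $\delta_3x^2+\delta_4x(u+\beta v)-\delta_1x(u+\alpha v)-\delta_2(u+\alpha v)(u+\beta v)=0$ through the minimal-multiplicity identity $\m^2=x\m$, so that after cancelling $x$ one gets scalar equations in the basis $x,u,v,w,c_4,\dots,c_n$. The fourth basis element $w$ --- this is exactly where $n\gs 3$ enters --- contributes an equation involving only $\delta_2$, namely $\bigl(a_{13}+(\alpha+\beta)a_{23}+\alpha\beta a_{33}\bigr)\delta_2=0$; if $\delta_2\neq 0$ the pair $(\alpha,\beta)$ satisfies a fixed polynomial, while $\delta_2=0$ forces $\delta_1=\delta_4\neq 0$ and hence $\alpha=\beta$. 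So the paper separates the $I_\alpha$ by constraining the putative isomorphism itself rather than by computing an invariant of $I_\alpha/xI_\alpha$; your approach would need the unproven (and generally false) eigenline normalization to get off the ground, and repairing it would essentially mean redoing an analysis of the quadratic structure of the kind the paper carries out.
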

\begin{proof}
Let $x$ be a minimal homogeneous reduction of the maximal ideal $\m$, and let $x,u,v,w,c_4,\cdots,c_n\in \m$ be elements of a minimal $k$-basis of $\m/\m^2$. By assumption $n \gs 3$, so we are guaranteed at least four elements in the basis of $\m/\m^2$.  Without losing any generality, we assume that $w$ is the fourth basis element.   Assume that there is a graded isomorphism
\[
	I_\alpha = ( x, u+ \alpha v ) \simeq ( x, u + \beta v ) = I_\beta
\]
where $\alpha,\beta$ are elements of $\kk$. As $\dim(R)=1$, these ideals are graded indecomposable maximal \CM modules.  Since this isomorphism is graded of degree 0, we have that
\begin{align*}
	x & \mapsto \delta_1 x + \delta_2 (u + \beta v) \\
	u + \alpha v & \mapsto \delta_3 x + \delta_4 (u + \beta v ) 
\end{align*}
where $\det (\delta_i)$ is a unit and $\delta_i$ are elements of $\kk$.  We have that
\begin{equation}\label{eqn:HS13_Original}
	0 = \delta_3 x^2 + \delta_4 x (u + \beta v ) - \delta_1 x(u + \alpha v) - \delta_2 (u + \alpha v )(u + \beta v ).
\end{equation}
Notice that $(u + \alpha v )(u + \beta v )$ is an element of $\m^2$.  As $R$ is of minimal multiplicity, we have that $x\m = \m^2$.  Hence we can view elements of $\m^2$ as elements of $x\m$.  In particular we view $u^2,uv,v^2$ in the following way
\[
	\begin{pmatrix} 
		u^2 \\
		uv \\
		v^2 \\
	\end{pmatrix} = \begin{pmatrix}
		x( {a}_{10}x + {a}_{11}u + {a}_{12}v + {a}_{13}w + a_{15}c_4 +\cdots + a_{1n}c_n )\\
		x( {a}_{20}x + {a}_{21}u + {a}_{22}v + {a}_{23}w + a_{25}c_4 +\cdots + a_{2n}c_n )\\
		x( {a}_{30}x + {a}_{31}u + {a}_{32}v + {a}_{33}w + a_{35}c_4 +\cdots + a_{3n}c_n )\\
	\end{pmatrix} = x \cdot A \cdot \begin{pmatrix}
		x \\
		u \\
		v \\
		w \\
		c_4 \\
		\vdots \\
		c_n
	\end{pmatrix}
\]  
where the matrix $A = (a_{ij})$, $1\leqslant i \leqslant 3$, $0\leqslant j \leqslant n$.  Since $u^2,uv,v^2$ are homogeneous elements, the grading forces the entries of $A$ to be elements of $\kk$.  
Further, if we let 
\[
	\Phi = \begin{pmatrix} 1 & \alpha+\beta & \alpha \beta \end{pmatrix} \text{ and\ \ } \bb = \begin{pmatrix} x & u & v & w & c_4 & \cdots & c_n \end{pmatrix}^t,
\]
then we can use matrix notation to write
\[
	(u + \alpha v )(u + \beta v ) = u^2 +(\alpha + \beta)uv + \alpha\beta v^2 = x \cdot \Phi \cdot A \cdot \bb.
\]
We can now cancel the $x$ in equation (\ref{eqn:HS13_Original}) and rewrite it as
\begin{equation}\label{eq:coeff-matrix}
	0 = \begin{pmatrix}
			\delta_3 - \Phi A_0 \delta_2 \\ \delta_4-\delta_1 - \Phi A_1 \delta_2 \\ \beta\delta_4 - \alpha\delta_1 - \Phi A_2 \delta_2 \\ -\Phi A_3 \delta_2 \\ -\Phi A_4 \delta_2 \\ \vdots \\ -\Phi A_n \delta_2
	\end{pmatrix}^t \cdot \bb
\end{equation}
where $A_i$ are the columns of the matrix $A$. All of the elements in the coefficient matrix of \eqref{eq:coeff-matrix} are elements of $k$ and hence equal zero as $x,u,v,w,c_4,\ldots,c_n$ form a $\kk$-basis. 

	At this point we focus on $\delta_2$.  If $\delta_2 \not=0$, then the fact that $\Phi A_3 \delta_2 = 0$ implies that
	\begin{equation}\label{eq:1n-d2=0}
		a_{13}+(\alpha + \beta)a_{23} +\alpha\beta a_{33} = 0
	\end{equation}
in the field $\kk$.  As the $a_{ij}$ are independent of our choice of $\alpha$ and $\beta$, Equation \eqref{eq:1n-d2=0} shows that every $\alpha,\beta$ such that $I_\alpha \simeq I_\beta$ is a solution to 
\[
	f(X,Y) = a_{13}+(X + Y)a_{23} +XY a_{33} \in \kk[X,Y].
\]
This forces $f(X,Y)$ to be identically zero, a contradiction.  Hence there are uncountably many $I_\alpha$ that are not isomorphic.

 	If we let $\delta_2 = 0$, then Equation \eqref{eq:1n-d2=0} becomes the relation
	\begin{equation}\label{eqn:IdealRelation-delta=0}
		\delta_3 x + (\delta_4 - \delta_1)u + (\beta \delta_4 - \alpha \delta_1) v = 0.
	\end{equation}
As $x,u,v$ are independent over $\kk$, we have that the coefficients are zero.  In particular $\delta_4 - \delta_1 = 0$. Since $\delta_1\delta_4-\delta_2\delta_3 \not= 0$, we know that $\delta_1 = \delta_4 \not=0$. Thus the fact that $\beta \delta_4 - \alpha \delta_1 = 0$ implies that $\alpha = \beta$.  Hence there are uncountably many non-isomrophic ideals $I_\alpha$.
\end{proof}

Given the above results, we are now ready to characterize one dimension standard graded rings of graded countable \CM type.

\begin{cor}\label{cor:dim1-class}
	Let $(R, \m,\kk)$ be a one-dimensional standard graded \CM ring with uncountable residue field $\kk$.  If $R$ is of graded countable \CM type, then $R$ is either of minimal multiplicity with $h$-vector $(1,2)$, or is isomorphic to one of the following hypersurfaces:
	\begin{enumerate}
		\item $\kk[x]$; 
		\item $\kk[x,y]/(xy)$; 
		\item $\kk[x,y]/(xy(x+y))$;
		\item $\kk[x,y]/(xy^2)$;
		\item $\kk[x,y]/(y^2)$.
	\end{enumerate}
\end{cor}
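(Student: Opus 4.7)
The plan is to feed the dichotomy of \cite[Theorem 5.3]{stoneSS} (minimal multiplicity or hypersurface) into two further classifications: Theorem \ref{thm:1n} on the minimal multiplicity side, and the Kn\"orrer--Buchweitz classification recalled in the introduction on the hypersurface side (via Remark \ref{rem:local-to-graded}).

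For the minimal multiplicity case, $R$ has $h$-vector $(1,n)$. Theorem \ref{thm:1n} forbids $n\geq 3$, and the subcase $n=2$ is exactly the first conclusion of the corollary. When $n\in\{0,1\}$ the embedding dimension of $R$ is at most $2$, so $R\cong\kk[x]$ or $R\cong\kk[x,y]/(f)$ with $f$ a homogeneous quadric; either way $R$ is already a graded hypersurface and can be absorbed into the next case.

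So assume $R$ is a graded hypersurface. Since $\dim R=1$, either $R\cong\kk[x]$ (case (1)) or $R\cong\kk[x,y]/(f)$ for a nonzero homogeneous $f$. Remark \ref{rem:local-to-graded} then transfers countable \CM type to the completion $\widehat R=\kk\llbracket x,y\rrbracket/(f)$, and the Kn\"orrer--Buchweitz classification with $r=0$ forces $\widehat R$ to be isomorphic to one of the ADE hypersurfaces $A_n,D_n,E_6,E_7,E_8$ or to $A_\infty$ or $D_\infty$ in two variables.

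The crux is to identify which of these analytic types admit a \emph{homogeneous} defining equation. Since $\kk$ is algebraically closed, $f$ factors into linear forms, so every analytic branch of $\widehat R$ is smooth, tangent to a line through the origin, and distinct branches have distinct tangent directions (higher multiplicities are allowed). Reading off the tangent cone of each Arnold normal form shows that this branch structure occurs only for $A_1$ (two transverse lines, $f=xy$), $D_4$ (three concurrent lines with distinct tangents, $f=xy(x+y)$), $A_\infty$ ($f=y^2$), and $D_\infty$ ($f=xy^2$); every other entry in the list produces either a cuspidal branch (as in $A_{2k}$, $E_6$, $E_8$, and partially in $D_{2k+1}$ or $E_7$) or smooth branches sharing a tangent direction (as in $A_{2k+1}$ with $k\geq 1$, and inside $D_n$ for $n\geq 6$). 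The four surviving polynomials are precisely cases (2)--(5), completing the classification. The main obstacle is this final tangent-cone analysis: conceptually it amounts to Euler's identity for homogeneous polynomials, but it must be carried out systematically across the entire Arnold list.
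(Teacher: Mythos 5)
Your skeleton --- the dichotomy of \cite[Theorem 5.3]{stoneSS}, Theorem \ref{thm:1n} to eliminate $h$-vectors $(1,n)$ with $n\gs 3$, and absorbing $n\ls 1$ into the hypersurface case --- matches the paper. But there is a genuine gap at the pivot of your hypersurface argument: you invoke Remark \ref{rem:local-to-graded} to ``transfer countable \CM type to the completion $\widehat R$,'' and the remark asserts exactly the \emph{converse} implication: if $\widehat R$ has countable type, then $R$ has graded \emph{countable} type. The direction you need --- that graded countable type of $R$ forces countable type of $\widehat R$ --- is proved nowhere in the paper and is not available in this generality: the completion could a priori carry an uncountable family of non-gradable indecomposable maximal \CM modules while the graded ones remain countable. (For \emph{finite} type the graded/complete equivalence is due to Auslander--Reiten \cite{auslander89}; no countable analogue is cited, and the paper is structured precisely to avoid needing one.) Without this transfer you cannot invoke the Kn\"orrer--Buchweitz--Greuel--Schreyer classification of complete hypersurfaces at all, so the tangent-cone analysis of the Arnold list --- the heart of your proof --- rests on an unjustified step. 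The point is visible on a concrete example: $\kk[x,y]/(y^3)$ is a one-dimensional standard graded \CM hypersurface whose completion lies outside the ADE/$(A_\infty)$/$(D_\infty)$ list, and your method excludes it \emph{only} through the illegitimate transfer; a correct proof must rule it out, and every higher-degree homogeneous $f$, by other means.

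The paper proceeds differently: the restriction to the stated list comes from the cited graded results themselves --- \cite[Theorem 5.3]{stoneSS} together with Theorem \ref{thm:1n}, with the super-stretched $h$-vector constraint \cite[Corollary 4.5]{stoneSS} (exploited in Corollary \ref{cor:dim1-gct-mult}) bounding the degree of a defining quadric or cubic --- while Remark \ref{rem:local-to-graded} is used only in its valid direction, namely to certify that rings (4) and (5), whose completions are the $(A_\infty)$ and $(D_\infty)$ singularities \eqref{eq1} and \eqref{eq2}, really are of graded countable type, with \cite{eisenbud88} and \cite{buchweitz87} certifying that (1)--(3) are of graded finite type. If you want to salvage your route with the paper's own tools, first bound $\deg f \ls 3$ via the $h$-vector constraint, enumerate the homogeneous forms in two variables up to linear change of coordinates ($xy$, $y^2$, $xy(x+y)$, $xy^2$, $y^3$), and then exclude $y^3$ directly by exhibiting an uncountable family of pairwise non-isomorphic graded maximal \CM modules, in the style of the parameterized-ideal argument proving Theorem \ref{thm:1n} --- rather than by appealing to the classification of the completion.
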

\begin{proof}
	A direct application of \cite[Theorem 5.3]{stoneSS} and Theorem \ref{thm:1n} show that $R$ is either a hypersurface ring, or has minimal multiplicity with $h$-vector $(1,2)$.

	Concerning the hypersurfaces, items (1)-(3) have graded finite \CM type as can be seen from \cite{buchweitz87} or \cite{eisenbud88}.  The hypersurfaces (4) and (5) are not graded finite \CM type, but their completions are the one dimensional $(A_\infty)$ and $(D_\infty)$ hypersurface singularities shown in \eqref{eq1} and \eqref{eq2} on page \pageref{eq1}.  It was shown by R. Buchweitz, G. Greuel, and F. Schreyer in \cite{buchweitz87} that these are the only hypersurfaces that are countable but not finite \CM type. Hence by Remark \ref{rem:local-to-graded}, the rings (4) and (5) are of graded countable \CM type.
\end{proof}

\begin{cor}\label{cor:dim1-gct-mult}
	Let $(R, \m, \kk)$ be a one-dimensional standard graded \CM ring with uncountable residue field $\kk$. If $R$ is of graded countable \CM type then $e(R) \ls 3$.
\end{cor}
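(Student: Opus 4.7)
The plan is to simply read off the multiplicities from the classification already established in Corollary \ref{cor:dim1-class}. By that corollary, any one-dimensional standard graded \CM ring of graded countable \CM type (with uncountable residue field) falls into one of two mutually exclusive buckets: either $R$ has minimal multiplicity with $h$-vector $(1,2)$, or $R$ is isomorphic to one of the five listed hypersurfaces. So the task reduces to verifying $e(R) \ls 3$ in each bucket.

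For the minimal multiplicity bucket, I would invoke the definition of $h$-vector together with the standard fact that for a one-dimensional standard graded \CM ring the multiplicity is the sum of the entries of the $h$-vector (equivalently, the length of $R/xR$ for $x$ a linear system of parameters). An $h$-vector of $(1,2)$ therefore gives $e(R) = 1+2 = 3$, which is the borderline case.

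For the hypersurface bucket, each ring in the list (1)--(5) is either $\kk[x]$ or of the form $\kk[x,y]/(f)$ with $f$ homogeneous of degree at most $3$. The multiplicity of a one-dimensional standard graded hypersurface $\kk[x,y]/(f)$ equals $\deg f$ (read off from the Hilbert series $(1 - t^{\deg f})/(1-t)^2 \cdot (1-t)$). Thus the multiplicities are $1,2,3,3,2$ respectively, all bounded by $3$.

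There is no real obstacle here; the work has been done in Theorem \ref{thm:1n}, \cite[Theorem 5.3]{stoneSS}, and Corollary \ref{cor:dim1-class}. The only thing to be slightly careful about is ensuring that the classification genuinely covers every possibility (which it does, since the theorem and \cite[Theorem 5.3]{stoneSS} together are a dichotomy between minimal multiplicity with $h$-vector $(1,2)$ and the hypersurface cases), and correctly computing $e(R)$ from the $h$-vector or the degree of $f$.
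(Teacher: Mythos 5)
Your proof is correct and takes essentially the same approach as the paper: both read the bound off the classification in Corollary \ref{cor:dim1-class} and compute $e(R)$ as the sum of the $h$-vector entries (the minimal multiplicity case gives $e(R)=3$, and the five hypersurfaces give $e(R)=1,2,3,3,2$). The only cosmetic difference is that the paper phrases the hypersurface cases via the list of possible $h$-vectors from \cite[Corollary 4.5]{stoneSS}, namely $(1)$, $(1,1)$, $(1,2)$, $(1,1,1)$, rather than computing $\deg f$ for each hypersurface directly.
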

\begin{proof}
	As shown in \cite[Corollary 4.5]{stoneSS}, the possible $h$-vectors are $(1)$, $(1,n)$, or $(1,n,1)$ for some integer $n$.  Combining this with Corollary \ref{cor:dim1-class}, we know that the possible $h$-vectors of $R$ are the following:
	\[
		(1),\ (1,1),\ (1,2),\ (1,1,1).
	\]
Hence we have that $e(R) \ls 3$.
\end{proof}

An obvious improvement to Corollary \ref{cor:dim1-class} would be to classify the rings whose $h$-vector is $(1,2)$.  Currently, there are two known examples of one-dimensional standard graded \CM rings of graded countable \CM type having $h$-vector $(1,2)$:
\begin{align}
	& \kk[x,y,z]/(xy,yz,z^2); \label{eqn:gw-1,2} \\
	& 	\kk[x,y,z] /{\det}_2\begin{pmatrix}
		 									x & y & z \\
											y & z & x 
								\end{pmatrix}.  \label{eqn:graded-1,2} 
\end{align}
The completion of \eqref{eqn:gw-1,2} is the local endomorphism ring of $(x,y)R$ where $R = \kk\llbracket x, y\rrbracket /( xy^2 )$, the one-dimensional $(D_\infty)$ hypersurface.  This endomorphism ring is known to be of countable \CM type (see \cite[Theorem 1.5]{leuschke05} or \cite[Example 14.23]{leuschke}) and thus by Remark \ref{rem:local-to-graded}, \eqref{eqn:gw-1,2} is of graded countable \CM type.  It is worth noting that \eqref{eqn:gw-1,2} is not reduced.

	As for \eqref{eqn:graded-1,2}, this is the only one dimensional ring of finite type and $h$-vector $(1,2)$ found in the list of graded finite \CM type rings \cite{eisenbud88}.  Given this result, if we assume our ring to have an isolated singularity, then we have a positive answer to Question \ref{q2}.

\begin{cor}\label{1reduce}
	Let $(R, \m, \kk)$ be a one-dimensional, reduced, standard graded \CM ring with uncountable residue field $\kk$.  Further assume that $R$ has a finite integral closure. If $R$ is of graded countable \CM type, then $R$ is of graded finite type and isomorphic to one of the following:
	\begin{enumerate}
		\item $\kk[x]$; 
		\item $\kk[x,y]/(xy)$; 
		\item $\kk[x,y]/(xy(x+y))$;
		\item \(
			\kk[x,y,z]/{\det}_2\begin{pmatrix}
			 									x & y & z \\
												y & z & x 
									\end{pmatrix}.
		\)
	\end{enumerate}
\end{cor}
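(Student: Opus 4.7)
The plan is to apply Corollary \ref{cor:dim1-class} and then use the reducedness and finite integral closure hypotheses to narrow the resulting list. By Corollary \ref{cor:dim1-class}, $R$ is isomorphic to one of the five hypersurfaces listed there or has minimal multiplicity with $h$-vector $(1,2)$. Among the hypersurfaces, items (4) $\kk[x,y]/(xy^2)$ and (5) $\kk[x,y]/(y^2)$ are not reduced: the element $y$ is nilpotent in (5), and $(xy)^2 = x\cdot xy^2 = 0$ shows $xy$ is nilpotent in (4). The reducedness hypothesis therefore excludes these two cases, leaving items (1)--(3), which are exactly items (1)--(3) of the present corollary.

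It remains to show that in the $h$-vector $(1,2)$ case the ring $R$ is isomorphic to the determinantal ring (4). The strategy is to verify that $R$ has graded finite \CM type and then appeal to the classification in \cite{eisenbud88}, in which the determinantal ring is the unique entry with $h$-vector $(1,2)$. To establish finite \CM type, I would use the Drozd-Ro\u\i ter conditions -- available thanks to the finite integral closure hypothesis -- in the reformulation of Proposition \ref{prop:ssDR}. Condition (DR1), namely $e(R) \ls 3$, is immediate since minimal multiplicity with $h$-vector $(1,2)$ forces $e(R) = 3$. For (DR2), namely $\lambda(\overline{\m^2}/x\m) \ls 1$, I would argue by contradiction: assuming $\lambda(\overline{\m^2}/x\m) \gs 2$ should provide enough extra freedom in $\overline{\m^2}$ to construct an uncountable family of pairwise non-isomorphic ideals $I_\alpha = (x, u+\alpha v)$, contradicting graded countable type, along the lines of Theorem \ref{thm:1n}.

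The main obstacle is the verification of (DR2). In the $(1,2)$ setting one has $\dim_\kk \m/\m^2 = 3$, which is precisely one less than what Theorem \ref{thm:1n} required: that proof used a fourth basis element $w$ of $\m/\m^2$ to force the vanishing of the coefficient $\Phi A_3 \delta_2$. To adapt the argument here, I expect to use the failure of (DR2) to supply additional relations -- elements of $\overline{\m^2}$ that lie outside $x\m = \m^2$ -- as a substitute for the missing basis element. Carrying out this construction in a way that yields uncountably many distinct isomorphism classes, while respecting the graded structure, is the technical heart of the argument.
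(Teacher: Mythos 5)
Your overall skeleton matches the paper's proof: apply Corollary \ref{cor:dim1-class}, use reducedness to discard the hypersurfaces $\kk[x,y]/(xy^2)$ and $\kk[x,y]/(y^2)$, and in the $h$-vector $(1,2)$ case verify the Drozd-Ro\u\i ter conditions via Proposition \ref{prop:ssDR} (after passing to the completion by Remark \ref{rem:local-to-graded}) and then quote the uniqueness of the $(1,2)$ entry in the list of \cite{eisenbud88}. The genuine gap is your treatment of condition \eqref{eq:ssDR2}: you leave $\lambda(\ol{\m^2}/x\m)\ls 1$ as an unexecuted plan, flagging it yourself as ``the technical heart,'' and you propose to obtain it by contradiction, extracting an uncountable family of ideals from the assumption $\lambda(\ol{\m^2}/x\m)\gs 2$ in the style of Theorem \ref{thm:1n}. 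In the paper this step is not a counting argument at all, and countable type plays no role in it: it is a short structural observation. Minimal multiplicity gives $\m^2 = x\m$, and since $R$ is standard graded, $\m^2 = \bigoplus_{i\gs 2}R_i$. The integral closure $\ol{\m^2}$ is a homogeneous ideal, and a linear form $\ell$ integral over $\m^2$ satisfies a homogeneous equation $\ell^n + a_1\ell^{n-1} + \cdots + a_n = 0$ in which $a_i$ is the degree-$i$ component of an element of $\m^{2i}$; since $\m^{2i}$ lives in degrees $\gs 2i$, each $a_i = 0$, so $\ell^n = 0$ and reducedness forces $\ell = 0$. Hence $\ol{\m^2}\subseteq \bigoplus_{i\gs 2}R_i = \m^2 = x\m$, so in fact $\lambda(\ol{\m^2}/x\m) = 0$, and Proposition \ref{prop:ssDR} together with \cite{cimen} gives finite type at the completion, hence graded finite type.

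Beyond being incomplete, the fallback you sketch is misdirected. In the reduced standard graded setting the hypothesis $\lambda(\ol{\m^2}/x\m)\gs 2$ is outright contradictory for the degree reasons above, so there is no ring in which to carry out the proposed construction; any completed version of your argument would have to contain, in disguise, the direct proof that the hypothesis is vacuous. Note also that the inequality is genuinely not a formal consequence of ``minimal multiplicity with $h$-vector $(1,2)$'' at the local level: the complete local domain $\kk\llbracket t^3,t^7,t^8\rrbracket$ has $e = 3$, minimal multiplicity, and $h$-vector $(1,2)$, yet $\ol{\m^2} = t^6\kk\llbracket t\rrbracket\cap R$ properly contains $t^3\m$ with $\lambda(\ol{\m^2}/t^3\m) = 2$. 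So the standard grading and reducedness must be used exactly at this point, whereas your proposal uses reducedness only to eliminate the two non-reduced hypersurfaces and never engages degree-one generation where it is decisive. Your instinct that the argument of Theorem \ref{thm:1n} must stop at embedding dimension $3$ is correct --- it has to, since the determinantal ring in item (4) has $h$-vector $(1,2)$ and finite type --- but the correct resolution is that the failure of \eqref{eq:ssDR2} simply cannot occur here, not that it can be parlayed into an uncountable family.
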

\begin{proof}
	By Remark \ref{rem:local-to-graded}, we can pass to the completion. Since local rings of minimal multiplicity with $h$-vector $(1,2)$ have $\lambda\left(\ol{\m^2}/x\m\right) \ls 1$, we can apply Proposition \ref{prop:ssDR} and Corollary \ref{cor:dim1-class} to obtain the desired result.
\end{proof}

\section{Non-Gorenstein Rings of Dimension at least Two} \label{sec:dim-least-2}

We are unable to classify standard graded rings of graded countable type with dimension larger than one.  However, if we assume an isolated singularity, we have a positive answer to Question \ref{q2} in the non-Gorenstein case.

\subsection{Non-Gorenstein Rings of Dimension Two} \label{sec:dim2}

	As shown in \cite{eisenbud88}, two dimensional standard graded \CM rings with an isolated singularity must be a domain and have minimal multiplicity.  In \cite{eisenbud-harris}, it can be seen that standard graded \CM domains of minimal multiplicity have the following classification: 
	\begin{enumerate} 
		\item[(i)] quadratic hypersurfaces (not necessarily nondegenerate);
		\item[(ii)] cones over the Veronese embedding $\mathbb P^2 \hookrightarrow \mathbb P^5$.  I.e., rings isomorphic to 
			\[
				\kk[x_0,\lst x n]\Big / {\det}_2\begin{pmatrix}
			 						x_0 & x_1 & x_2 \\
									x_1 & x_3 & x_4 \\ 
									x_2 & x_4 & x_5
								  \end{pmatrix},
			\]
			where for $n\gs 5$;

		\item[(iii)] rational normal scrolls.
	\end{enumerate}
	The ring defined in (i) is Gorenstein and is discussed in Section \ref{sec:gor}.  The ring in (ii) is of graded finite (hence countable) \CM type when $n=5$ as can be seen in \cite{eisenbud88} or \cite[Example 17.6.1]{yoshino90}.  For $n >6$, the dimension of the singular locus is larger than 1, hence they are not of graded countable type. 
	However when $n=6$, it is unclear if the ring is graded countable type or not. Notice that this case does not concern Question \ref{q2} as the ring does not have an isolated singularity, but does play a role in a general classification. Further, the dimension of each ring in (ii) is at least three.  As such, it is left to consider the rational normal scrolls defined as follows.
	\begin{defn}\label{def:scroll}
		Let $0 \ls a_0 \ls a_1 \ls \cdots \ls a_k$ be given integers and let $\{x_{j}^i \vl 0 \ls j \ls a_i, 0\ls i \ls k \}$ be a set of variables over a field $\kk$.  Then, take the ideal $I$ of the polynomial ring $S = \kk[x_{j}^i \vl 0 \ls j \ls a_i, 0 \ls i \ls k]$ generated by all the $2\times 2$-minors of the matrix:
		\[
			\begin{pmatrix}
				x_{0}^0 & x_{1}^0 & \cdots & x_{a_0-1}^0  & | &  \cdots & | \ x_{0}^k  & x_{1}^k & \cdots & x_{a_k-1}^k \\
				x_{1}^0 & x_{2}^0 & \cdots & x_{a_0}^0   & |  & \cdots & |\  x_{1}^k & x_{2}^k & \cdots & x_{a_k}^k
			\end{pmatrix}.
		\]
	Define the graded ring $R$ to be the quotient $S/I$ with $\deg(x_{j}^i) = 1$ for all $i,j$, and call $R$ the \dfn{scroll} of type $(a_0,\lst a k)$.  
	\end{defn}

		Notice that the polynomial ring $\kk[x_0, \lst x N]$ is a rational normal scroll of type $(0,\cdots, 0 , 1)$, where $N = \sum_{i=0}^k a_i +k+1$. Further, a scroll is of type $(0,\cdots,0,1)$ if and only if it is regular (for basic properties of scrolls, see \cite{eisenbud-harris,miro13}).  In general, it is known that scrolls are integrally closed \CM domain of dimension $k+2$. 

\begin{prop}\label{prop:dim2-nonGor}
	Let $(R,\m,\kk)$ be a standard graded \CM ring that is not Gorenstein and $\dim(R) = 2$. Further assume that $R$ has an isolated singularity  and that $\kk$ is an uncountable field. If $R$ is of graded countable \CM type, then $R$ is of graded finite type and is isomorphic to
	\[
		\kk[x_0, \lst x n] \Big /{\det}_2\begin{pmatrix}
		 									x_0 & \ldots & x_{n-1} \\
											x_1 & \ldots & x_{n} 
									\end{pmatrix},
	\]
where $n\gs 2$.
\end{prop}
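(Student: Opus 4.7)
The plan is to run a direct case analysis using the classification of standard graded Cohen-Macaulay domains of minimal multiplicity recalled immediately before the statement. Since $R$ is a 2-dimensional standard graded \CM ring with an isolated singularity, I would first invoke the cited result from \cite{eisenbud88} to conclude that $R$ must be a domain and must have minimal multiplicity; this reduces the problem to the three families (i)--(iii) listed in the introduction to Section~\ref{sec:dim2}.

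Next I would eliminate the first two families. Family (i) consists of quadratic hypersurfaces, which are Gorenstein by definition, contradicting our non-Gorenstein hypothesis. Family (ii) is the cone over the Veronese surface $\mathbb{P}^2 \hookrightarrow \mathbb{P}^5$; since $\mathbb{P}^2$ has dimension $2$, its affine cone has Krull dimension $3$, contradicting $\dim R = 2$. Thus $R$ must be a rational normal scroll in the sense of Definition~\ref{def:scroll}.

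For the third family I would specialize Definition~\ref{def:scroll} to dimension $2$: since a scroll of type $(a_0,\ldots,a_k)$ has Krull dimension $k+2$, we must take $k=0$, so $R$ is of type $(a_0)$ for a single integer $a_0 \gs 1$, which is exactly the ring displayed in the statement with $n = a_0$. The case $a_0 = 1$ yields the polynomial ring $\kk[x_0,x_1]$, which is regular and hence Gorenstein, so the non-Gorenstein hypothesis forces $n \gs 2$. Finally, to finish the proof I would cite the Eisenbud--Herzog list of standard graded rings of finite \CM representation type in \cite{eisenbud88}, where the rational normal curve cones (equivalently, the scrolls of type $(n)$) appear explicitly; alternatively this can be seen from \cite[Example 17.6.1]{yoshino90} together with the passage between graded finite type and finite type on the completion recalled in Remark~\ref{rem:local-to-graded}.

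There is no real obstacle here beyond careful bookkeeping: all of the difficult work (the structure theorem for minimal-multiplicity standard graded \CM domains, the dimension formula for scrolls, and the finite \CM type statement for the cones over rational normal curves) has already been imported from the cited references. The only subtlety to watch is making sure the countability hypothesis is actually used to rule out anything outside of cases (i)--(iii); but in dimension two with isolated singularity the minimal multiplicity assumption already forces $R$ into that list, so graded countable \CM type only enters through the contrapositive for the excluded cases and through the verification that the surviving scrolls are indeed of graded finite \CM type.
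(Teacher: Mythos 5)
Your proposal is correct and follows essentially the same route as the paper: reduce via \cite{eisenbud88} to a domain of minimal multiplicity, run through the Eisenbud--Harris trichotomy from \cite{eisenbud-harris}, discard the quadratic hypersurfaces (Gorenstein) and the Veronese cone (dimension $3$), and conclude that the surviving two-dimensional scrolls of type $(n)$, $n \gs 2$, are of graded finite type by \cite{eisenbud88}. The only differences are cosmetic: you spell out the elimination of cases (i) and (ii), the dimension count $k+2$, and the exclusion of $a_0=1$ (the regular, hence Gorenstein, case), all of which the paper compresses into ``by the above discussion,'' while the paper cites \cite[Theorem 2.3]{auslander87} where you offer \cite{yoshino90} as the alternative reference for finite type.
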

\begin{proof} 
	It was shown in \cite{eisenbud88} that two dimensional, non-Gorenstein rings with an isolated singularity have minimal multiplicity.

By the above discussion, we know that $R$ must be isomorphic to a two dimensional scroll.  The only non-Gorenstein scrolls of dimension two are of type $(m)$ where $m\gs 2$.  These are the rings listed in the statement of Proposition \ref{prop:dim2-nonGor}.  It is known that these rings are of graded finite \CM type (see \cite{eisenbud88} or \cite[Theorem 2.3]{auslander87}).
\end{proof}

	Proposition \ref{prop:dim2-nonGor} gives a partial (positive) answer to Question \ref{q2}. Ideally though, we would like to remove the isolated singularity condition from the hypothesis of Proposition \ref{prop:dim2-nonGor}.  Doing so would show that graded countable \CM type implies graded finite \CM type for non-Gorenstein two dimensional rings.

	\subsection{Non-Gorenstein Rings of Dimension at least Three}\label{sec:dim3}

	Let $(R,\m,\kk)$ be a standard graded \CM ring of graded countable \CM type, that is not Gorenstein and $\dim R \gs 3$.  By \cite[Proposition 4.6]{stoneSS}, we know that $R$ must be a domain and have minimal multiplicity.  Thus we only have to consider the three classes of rings listed in Section \ref{sec:dim2} in order to obtain the following.

	\begin{prop}\label{prop:dim3-nongor-gct}
		Let $(R,\m,\kk)$ be a standard graded \CM ring with uncountable residue field $\kk$ and an isolated singularity. Further assume that $R$ is not Gorenstein and $\dim R \gs 3$.  If $R$ is of graded countable \CM type, then $R$ is of graded finite type and is isomorphic one of the following rings:
		\begin{enumerate}
			\item $\kk[x_1,\ldots,x_{5}]/{\det}_2\begin{pmatrix}
		 									x_1 & x_2 & x_4 \\
											x_2 & x_3 & x_5 
									\end{pmatrix}$;
			\item $\kk[x_1,\ldots,x_{6}]/{\det}_2(\text{sym}\ 3\times 3)$.
		\end{enumerate}
	\end{prop}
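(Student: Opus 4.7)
The plan is to combine \cite[Prop.~4.6]{stoneSS} with the classification of graded \CM\ domains of minimal multiplicity recalled in Section~\ref{sec:dim2} and Eisenbud's list of graded finite \CM\ type rings from \cite{eisenbud88}.

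First, \cite[Prop.~4.6]{stoneSS} applied to our hypotheses forces $R$ to be a domain of minimal multiplicity. By the Eisenbud--Harris classification, $R$ is then isomorphic to one of: (i) a quadratic hypersurface; (ii) a cone over the Veronese embedding $\mathbb{P}^2\hookrightarrow\mathbb{P}^5$; or (iii) a rational normal scroll. Case (i) is Gorenstein and so is excluded. In case (ii), the ring has dimension $n-2$, and for $n\geq 6$ the extra variables $x_6,\ldots,x_n$ split off a polynomial factor, producing an $(n-5)$-dimensional singular locus; thus isolated singularity forces $n=5$, yielding ring (2), which is of graded finite \CM\ type by \cite{eisenbud88} and \cite[Example 17.6.1]{yoshino90}.

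In case (iii), writing $R$ as the scroll of type $(a_0,\ldots,a_k)$, the assumption $\dim R\geq 3$ gives $k\geq 1$; an analogous free-variable argument shows that isolated singularity forces $a_i\geq 1$ for every $i$ (any $a_i=0$ makes $R\cong\kk[t]\otimes R'$ with $R'$ singular, producing a positive-dimensional singular locus); and the non-Gorenstein hypothesis rules out the Segre scroll $(1,1)$, which is a quadric hypersurface. I would then invoke Eisenbud's classification in \cite{eisenbud88}, together with \cite[Theorem 2.3]{auslander87}, to conclude that the only scroll in the remaining class which is of graded finite \CM\ type is the scroll of type $(2,1)$, i.e.\ ring (1).

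The main obstacle is showing that every other scroll $(a_0,\ldots,a_k)$ in the class just described --- for example $(3,1)$, $(2,2)$, or $(1,1,1)$ --- fails to be of graded countable \CM\ type. My plan here is to adapt the uncountable-family technique from the proof of Theorem~\ref{thm:1n} (built on \cite[Lemma 4.1]{stoneSS}): in each such scroll $R$, use a pair of independent linear forms appearing in the $2\times 2$ minors defining $R$ to parametrize a one-parameter family of graded indecomposable MCM modules $\{M_\alpha\}_{\alpha\in\kk}$ (for instance via suitable high syzygies of natural cyclic modules, or via one-parameter families of rank-one divisorial reflexive ideals), and then verify by a degree-zero automorphism argument --- as in \eqref{eqn:HS13_Original} and the lines following it --- that $M_\alpha\cong M_\beta$ would force a polynomial identity in $\alpha,\beta$ that cannot hold identically over the uncountable field $\kk$. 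This contradicts graded countable type and leaves precisely rings (1) and (2), both of which are of graded finite \CM\ type by the results cited above.
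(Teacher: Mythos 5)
Your reduction follows the paper's skeleton exactly: \cite[Proposition 4.6]{stoneSS} forces $R$ to be a domain of minimal multiplicity, the Eisenbud--Harris trichotomy applies, case (i) is excluded as Gorenstein, and the isolated singularity forces $n=5$ in case (ii). The genuine gap is in case (iii), which you correctly flag as the main obstacle but leave as an unexecuted plan. The paper closes it with a citation you are missing: Auslander and Reiten \cite{auslander89b} determined the representation type of scrolls and showed that for $k \gs 1$ any scroll not of type $(1,1)$ or $(1,2)$ has $|\kk|$ many indecomposable graded maximal \CM modules up to shift. Since $\kk$ is uncountable, this instantly eliminates every scroll of dimension at least three except the Segre scroll $(1,1)$ (Gorenstein, hence excluded) and the scroll $(1,2)$, with no construction of families needed --- and in fact with no use of the isolated singularity hypothesis, which is why the paper can later remark that in dimension three this part of the classification holds without it. Note also that once all $a_i \gs 1$, the scroll cone has an isolated singularity anyway, so your hope of using the singularity hypothesis to narrow case (iii) beyond excluding $a_i = 0$ gains nothing; the uncountability statement has to do all the work.

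Moreover, the specific fallback you sketch would fail as stated. Adapting Theorem \ref{thm:1n} via ``one-parameter families of rank-one divisorial reflexive ideals'' cannot succeed on a scroll: a scroll is a normal domain whose divisor class group is isomorphic to $\Z$ (the Picard group $\Z^2$ of the smooth projective scroll modulo the hyperplane class), so up to isomorphism and shift there are only \emph{countably} many rank-one graded maximal \CM modules --- no uncountable family exists in rank one. The uncountable families exhibited in \cite{auslander89b} necessarily occur in higher rank, and constructing them and proving indecomposability and pairwise non-isomorphism by hand would amount to reproving a substantial portion of that paper; it is much harder than the degree-zero-automorphism computation in Theorem \ref{thm:1n}, which crucially exploits the dimension-one fact that the ideals $(x, u+\alpha v)$ are themselves maximal \CM. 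A minor misattribution as well: \cite[Theorem 2.3]{auslander87} is what the paper invokes for the two-dimensional scrolls of type $(m)$ in Proposition \ref{prop:dim2-nonGor}; in dimension at least three the correct source for both the exclusions and the finite type of the scroll $(1,2)$ is \cite{auslander89b} together with \cite{eisenbud88}.
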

	\begin{proof}

		Consider the classes of rings listed in Section \ref{sec:dim2}.  We can ignore the rings in (i) as they are Gorenstein.  The rings in (ii) only have an isolated singularity when $n = 5$.  This ring is known to be of graded finite \CM type \cite{eisenbud88}. As such, we list it above.

		Concerning the scrolls in (iii), M. Auslander and I. Reiten  classify the scrolls of finite type in \cite{auslander89b}.  In particular, they show that if $k \gs 1$ and $R$ is not of type $(1,1)$ or $(1,2)$, then $R$ has $|\kk|$ many indecomposable graded \CM modules, up to shifts.  As it is, the graded scrolls of type $(1,1)$ and $(1,2)$ are the only graded scrolls of dimension at least 3 that have graded countable \CM type. Notice that a scroll of type $(1,1)$ is Gorenstein and hence we are left with the scroll of type $(1,2)$.
	\end{proof}

	Proposition \ref{prop:dim3-nongor-gct} gives another case where Question \ref{q2} has a positive answer. Concerning the general classification, in order to remove the isolated singularity condition from the hypothesis, one only needs to determine the graded \CM  type of 
		\begin{equation}\label{eqn:veronese-cone}
			\kk[x_0,\lst x 6]\Big / {\det}_2\begin{pmatrix}
		 						x_0 & x_1 & x_2 \\
								x_1 & x_3 & x_4 \\ 
								x_2 & x_4 & x_5
							  \end{pmatrix}.
		\end{equation}
	Knowing this information would give a complete classification for non-Gorenstein rings of dimension at least three.

	\section{Gorenstein Rings}\label{sec:gor}

	Not much is known about Gorenstein rings of graded countable \CM type. However, it is well known that standard graded Gorenstein rings of graded finite \CM type are hypersurfaces \cite[Satz 1.2]{herzog78}.  This fact is heavily exploited in the classification of standard graded \CM rings of graded finite \CM type \cite{eisenbud88}.  The countable analog of this fact is still unknown.

	\begin{conj}\label{conj:gor}
		A Gorenstein ring of countable \CM type is a hypersurface.
	\end{conj}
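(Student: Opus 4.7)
The plan is to mirror the strategy behind Herzog's theorem for the finite type case \cite[Satz 1.2]{herzog78}: reduce the conjecture to the zero-dimensional case and then invoke Proposition \ref{prop:gct-0-dim} (or its complete local analogue). Explicitly, starting from a Gorenstein \CM ring $(R,\m,\kk)$ of countable \CM type, I would pass to the $\m$-adic completion (Remark \ref{rem:local-to-graded} handles the direction needed in the standard graded setup) and then choose a maximal $R$-regular sequence $\lst x d$ inside $\m$. Modding out by this sequence produces an Artinian Gorenstein quotient $\bar R = R/(\lst x d)$, and the goal is to show that $\bar R$ again has countable \CM type. Once this is in hand, Proposition \ref{prop:gct-0-dim} forces $\bar R$ to be a hypersurface, and since $\dim_\kk(\m/\m^2) = \dim_\kk(\m\bar R/\m^2\bar R)$ when the $x_i$ are part of a minimal generating set of $\m$, the ring $R$ itself must be a hypersurface.

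The key step is the descent of countable \CM type from $R$ to $\bar R$. One natural approach is induction on $d$: given a non-zero-divisor $x \in \m$ and an indecomposable MCM $R/(x)$-module $M_0$, one produces an MCM lift $M$ over $R$ with $M/xM \cong M_0$ up to free summands, e.g.\ via the maximal Cohen-Macaulay approximation of Auslander-Buchweitz or, in favorable cases, via matrix factorizations in the sense of Eisenbud. If one could show that the fibres of the reduction map
\[
\{\text{indecomposable MCM }R\text{-modules}\}/\!\cong \;\longrightarrow\; \{\text{indecomposable MCM }\bar R\text{-modules}\}/\!\cong
\]
are always countable, then countable \CM type would descend as desired. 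A more direct alternative is to assume $R$ is Gorenstein and \emph{not} a hypersurface and construct an uncountable family $\{M_\alpha\}_{\alpha \in \kk}$ of pairwise non-isomorphic indecomposable MCM $R$-modules, perhaps by lifting the ideals $(a + \alpha b)\bar R$ appearing in the proof of Proposition \ref{prop:gct-0-dim} to second syzygies or Cohen-Macaulay approximations over $R$, then verifying non-isomorphism via an invariant such as the annihilator of $M_\alpha \otimes_R \bar R$ or the multiplicity along a suitable component of the singular locus.

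The principal obstacle is controlling how indecomposable MCM modules over $R$ decompose modulo $x$. In the finite \CM type setting, Herzog finesses this via socle considerations and the tight grip that Auslander-Reiten theory affords over Artinian Gorenstein rings. In the countable setting, the worry is genuine collapse: an uncountable family of MCMs over $\bar R$ could in principle arise as summands of the reductions of only countably many indecomposable MCMs upstairs, with distinct $M_\alpha, M_\beta$ both appearing as summands of a common lift of higher rank. Ruling out this collapse---equivalently, bounding the fibres of the reduction map by a countable set, or else producing an uncountable family directly---is the hard part, and is presumably the reason the conjecture has remained open despite its finite analogue being classical. I would therefore expect a successful proof to route through either a quantitative refinement of Herzog's socle argument valid in the countable regime or a new construction exploiting the uncountability of $\kk$ in the spirit of \cite[Lemma 4.1]{stoneSS}.
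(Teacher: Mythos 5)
You should know at the outset that the statement you were asked to prove is Conjecture \ref{conj:gor}: the paper offers \emph{no} proof of it, explicitly says the countable analogue of Herzog's theorem ``is still unknown,'' and records only partial cases --- dimension at most one in the standard graded setting via the super-stretched methods of \cite{stoneSS}, and the easy case of minimal multiplicity, where symmetry of the $h$-vector forces $h$-vector $(1,1)$ and hence a quadric hypersurface. So there is no paper proof to compare yours against; the only question is whether your argument settles the conjecture, and by your own admission it does not: everything hinges on the unproven descent of countable \CM type along $R \to R/(x)$ (equivalently, countability of the fibres of your reduction map), which you correctly flag as ``the hard part.'' A proposal whose pivotal step is acknowledged to be missing is a research plan, not a proof.

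Beyond that, several of the concrete mechanisms you propose would fail. The lifting step is false as stated: not every maximal \CM $R/(x)$-module is, up to free summands, of the form $M/xM$ for $M$ maximal \CM over $R$. Take $R = \kk\ps{t}$ and $x = t^2$: the only indecomposable maximal \CM $R$-module is $R$ itself, so every reduction $M/xM$ is free over $R/(t^2)$, yet $\kk$ is a nonfree maximal \CM module over the Artinian ring $R/(t^2)$. (Tensoring $0 \to U \to F \to N \to 0$, with $U = \Syz^1_R(N)$ and $F$ free, by $R/(x)$ yields only the exact sequence $0 \to N \to U/xU \to F/xF \to N \to 0$, since $\Tor_1^R(N,R/(x)) \cong N$; this embeds $N$ in $U/xU$ but in no way splits it off. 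And Eisenbud's matrix factorizations exist only over hypersurfaces, so invoking them here is circular.) Your embedding-dimension bookkeeping is also backwards: if $\lst x d$ are part of a minimal generating set of $\m$, then $\dim_\kk(\m/\m^2)$ drops by exactly $d$ upon reduction, it is not preserved; the correct endgame would be $\mathrm{embdim}(\ol R) \ls 1$, hence $\mathrm{embdim}(R) \ls d+1$, which for a complete \CM local ring does force a hypersurface. Finally, Herzog's theorem \cite[Satz 1.2]{herzog78} is not proved by reduction to the Artinian case, so there is no such argument to ``mirror'': the standard proof (see \cite{leuschke}) uses that over a Gorenstein ring the syzygy functor is an auto-equivalence of the stable category of maximal \CM modules, so finite type forces it to permute a \emph{finite} set of indecomposables; each orbit is then finite, minimal free resolutions become eventually periodic, the residue field has bounded Betti numbers, and Gulliksen's theorem yields a hypersurface. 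Finiteness enters precisely as this pigeonhole, and it is exactly what countability cannot supply --- a permutation of a countably infinite set may have infinite orbits --- which is why the conjecture remains open. If you want a tractable route in the graded setting, the paper's closing remark in Section \ref{sec:gor} points to one: by \cite{stoneSS} a standard graded complete intersection of graded countable \CM type is a hypersurface or is defined by two quadrics, and singularity-category methods \cite{stevenson} together with $h$-vector analysis are expected to eliminate the two-quadric case; likewise a positive answer to Question \ref{quest:gor-min} would settle the graded conjecture.
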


	Using the concept of \SuSp, Conjecture \ref{conj:gor} was shown to be true for standard graded rings of dimension at most one \cite{stoneSS}, but the conjecture remains open for higher dimensions. By the results of \cite{knorrer87, buchweitz87} on hypersurfaces, a positive result to Conjecture \ref{conj:gor} would completely classify Gorenstein rings of graded countable \CM type.

	If we restrict to rings of minimal multiplicity, then we are only dealing with hypersurfaces, as $h$-vectors of Gorenstein rings are symmetric. As such, by \cite{knorrer87, buchweitz87} we have a complete characterization.  In particular, consider a standard graded Gorenstein ring $(R,\m,\kk)$ of minimal multiplicity, i.e. the $h$-vector is $(1,1)$. Hence if $R$ is of graded countable \CM type, then $R$ is isomorphic to one of the following hypersurfaces:
		\begin{align*}
			(A_1): &\  \kk[\lst x n]/(x_1^2 + \cdots + x_n^2),\ n \gs 1; \\
			(A_\infty): &\  \kk[\lst x n]/(x_2^2 + \cdots + x_n^2),\  n \gs 1.
		\end{align*}
	\begin{rem}\label{cor:gor-iso-min-finite}
	 	If we further assume that a standard graded Gorenstein ring of minimal multiplicity has an isolated singularity, then graded countable \CM type implies graded finite \CM type. To see this, notice that only $(A_1)$ has a zero dimensional singular locus.
	\end{rem}

	It is worth noting that standard graded rings of dimension at least 2 and with graded finite \CM type have minimal multiplicity. This can be seen by examining the list graded rings of graded finite \CM type in \cite{eisenbud88}.  With these results, we ask the following question.

	\begin{quest}\label{quest:gor-min} 
		If $(R,\m,\kk)$ is a standard graded Gorenstein ring of graded countable \CM type and $\dim(R) \gs 2$, is $R$ necessarily of minimal multiplicity?
	\end{quest}

		A positive answer to Question \ref{quest:gor-min} would ultimately force the ring to be a hypersurface and affirm Conjecture \ref{conj:gor}. Further it would show that Question \ref{q2} has an affirmative answer for Gorenstein rings of graded countable \CM type.   

	\begin{rem}
			Since Conjecture \ref{conj:gor} is still open, the natural place to look are rings of complete intersection.  According to \cite{stoneSS}, we know that standard graded complete intersection with graded countable \CM type are either a hypersurface or defined by two quadrics.  With this fact, one could use tools from representation theory \cite{stevenson} and the analysis of the $h$-vector, to show that the ring is indeed a hypersurface no matter what the dimension. As these techniques are outside the scope of the paper, we leave it here as a remark.
	\end{rem}

	\section{Summary of Results}\label{sec:sumup}

	In Corollary \ref{main:cor} we summarize the previous results in relation to Question \ref{q2}. As noted earlier, proving Conjecture \ref{conj:gor} would give a complete answer to the question.  In the rest of the section, we consider what can be shown when the ring in question does not necessarily have an isolated singularity.  In particular, we state a partial classification of standard graded rings of graded countable \CM type.  

	\begin{cor}\label{main:cor}
		Let $(R,\m,\kk)$ be a standard graded ring of graded countable \CM type.  Further assume that $R$ has an isolated singularity and that $\kk$ is an uncountable algebraically closed field of characteristic zero.  Then $R$ is of graded finite type if one of the following hold:
		\begin{enumerate}
			\item $\dim(R) \ls 1$;
			\item $R$ is not Gorenstein;
			\item $R$ is of minimal multiplicity.
		\end{enumerate} 
	\end{cor}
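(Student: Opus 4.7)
The plan is to dispatch each of the three cases by assembling results already proven in the paper, so the argument is largely a matter of bookkeeping on dimension and the Gorenstein/non-Gorenstein dichotomy.

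For case (1), I would split on dimension. When $\dim R = 0$, Proposition \ref{prop:gct-0-dim} directly yields the conclusion (graded countable type is equivalent to graded finite type in that setting, independent of the isolated singularity hypothesis). When $\dim R = 1$, the isolated singularity hypothesis combined with Cohen-Macaulayness forces $R$ to be reduced: there are no embedded primes, and at each minimal prime the localization is a regular (hence reduced) local ring. Moreover, a standard graded reduced ring of dimension one has its integral closure finite as an $R$-module (the normalization of each component is module-finite over the corresponding domain, and there are only finitely many components). The hypotheses of Corollary \ref{1reduce} are therefore met, and the conclusion follows.

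For case (2), I would again split on dimension. If $\dim R \ls 1$, fall back on case (1). If $\dim R = 2$, apply Proposition \ref{prop:dim2-nonGor} directly. If $\dim R \gs 3$, apply Proposition \ref{prop:dim3-nongor-gct}. In each sub-case the result identifies $R$ explicitly as one of the rings on the classification lists in \cite{eisenbud88}, which are of graded finite \CM type.

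For case (3), the non-Gorenstein sub-case reduces to case (2), so the only new content is when $R$ is Gorenstein of minimal multiplicity. Here the $h$-vector is symmetric and of the form $(1,n)$ by the equivalence recorded on page \pageref{min-mult-equiv}, which forces $n=1$; thus $R$ is a hypersurface. By the classification from \cite{knorrer87,buchweitz87} recalled in Section \ref{sec:gor}, a standard graded Gorenstein hypersurface of graded countable \CM type with $h$-vector $(1,1)$ is isomorphic either to the $(A_1)$ or the $(A_\infty)$ singularity. The $(A_\infty)$ singularity is excluded by the isolated singularity hypothesis (its singular locus is positive-dimensional, as noted in Remark \ref{cor:gor-iso-min-finite}), leaving the $(A_1)$ hypersurface, which is of graded finite \CM type. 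There is no substantial obstacle here, only the need to verify that the three cases really are covered by the three referenced propositions and corollaries; the mild subtleties are the reduction/finite-integral-closure check in case (1) for $\dim 1$ and the Gorenstein-minimal-multiplicity sub-case of (3), both of which are essentially observations rather than arguments.
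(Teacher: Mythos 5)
Your proposal is correct and takes essentially the same route as the paper's proof: the same case split by dimension and the Gorenstein/non-Gorenstein dichotomy, invoking Proposition \ref{prop:gct-0-dim} and Corollary \ref{1reduce} in dimension at most one, Propositions \ref{prop:dim2-nonGor} and \ref{prop:dim3-nongor-gct} in the non-Gorenstein case, and the $(A_1)$/$(A_\infty)$ dichotomy of Remark \ref{cor:gor-iso-min-finite} for Gorenstein rings of minimal multiplicity. Your explicit check that the isolated singularity together with the Cohen--Macaulay hypothesis forces reducedness and finite integral closure in dimension one is a detail the paper leaves implicit when it cites Corollary \ref{1reduce}, and it is correct.
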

	\begin{proof}
	If the dimension of $R$ is at most 1, then we can directly apply Proposition \ref{prop:gct-0-dim} and Corollary \ref{1reduce}. For non-Gorenstein rings of dimension at least 2, we can apply Propositions \ref{prop:dim2-nonGor} and \ref{prop:dim3-nongor-gct}.  If $R$ is Gorenstein of minimal multiplicity and $\dim(R)\gs 2$, then we have graded finite \CM type from Remark \ref{cor:gor-iso-min-finite} and the discussion preceding it.
	\end{proof}

	\subsection{Partial classification in the general case}

	Throughout this paper, we have obtained a partial classification of standard graded rings of graded countable \CM type. For uniformity's sake, we state the partial classification in the same format as the classification of graded finite \CM type found in \cite{eisenbud88}. As such, notice that rings listed in the arbitrary dimension section are omitted from the other cases.  In particular, the $A_\infty$ hypersurface is only listed in the arbitrary dimension section.  For more details on rings of countable \CM type, see \cite{leuschke}.\smallskip

	\paragraph{\bf Arbitrary Dimension:} The following rings are shown to be graded countable type in \cite{eisenbud88, knorrer87, buchweitz87}.  This list is not known to be complete.
	\begin{align*} 
		& \kk[\lst x n],\ n \gs 1; \\
		& \kk[\lst x n]/(x_1^2 + \cdots + x_n^2),\ n \gs 1; \\
		& \kk[\lst x n]/(x_2^2 + \cdots + x_n^2),\ n \gs 1.	
	\end{align*}

	\paragraph{\bf Dimension Zero:} As seen in Proposition \ref{prop:gct-0-dim}, graded countable type is the same as graded finite type.  By \cite{eisenbud88}, we have the following complete list.
	\begin{align*}
		& \kk[x]/(x^m), \ m \gs 1.
	\end{align*}

	\paragraph{\bf Dimension One:} 
	We do not have a complete list of dimension one graded countable type rings. However, by Corollary \ref{1reduce} and the discussion before it, we know these rings must have $h$-vector $(1,2)$ or be isomorphic to
	\begin{align*}
		& \kk[x,y]/(xy); \\
		& \kk[x,y]/(xy(x+y));\\
		& \kk[x,y]/(xy^2);\\
		& \kk[x,y,z]/{\det}_2\begin{pmatrix}
			 									x & y & z \\
												y & z & x 
									\end{pmatrix}; \\
		& \kk[x,y,z]/(xy,yz,z^2).
	\end{align*}

	\paragraph{\bf Dimension Two:} According to Proposition \ref{prop:dim2-nonGor}, the non-Gorenstein graded countable type rings with an isolated singularity are known.  Besides the ring given in the Arbitrary Dimension section, we are not aware of any other graded countable type ring with a non-isolated singularity. Thus we state what was already recorded in \cite{eisenbud88}.
	\begin{align*}
		& \kk[x_1,\ldots,x_{n+1}]\Big /{\det}_2\begin{pmatrix}
		 									x_1 & \ldots & x_n \\
											x_2 & \ldots & x_{n+1} 
									\end{pmatrix}, \ n \gs 2.
	\end{align*}

	\paragraph{\bf Dimension Three:} 
	If we restrict the proof of Proposition \ref{prop:dim3-nongor-gct} to three dimensional rings, we can omit the isolated singularity condition. As such, we know the rings below are the only non-Gorenstein graded countable type rings of dimension three.  As the Gorenstein case is open, it is not know if this is a complete list or not. 
	\begin{align*}
		& \kk[x_1,\ldots,x_{5}]\Big /{\det}_2\begin{pmatrix}
	 									x_1 & x_2 & x_4 \\
										x_2 & x_3 & x_5 
								\end{pmatrix};\\
		& \kk[x_1,\ldots,x_{6}]/{\det}_2(\text{sym}\ 3\times 3).
	\end{align*}

	\begin{rem}
		According to the remarks after Proposition \ref{prop:dim3-nongor-gct}, for non-Gorenstein rings of dimension at least four, the only possible ring is \eqref{eqn:veronese-cone}.
	\end{rem}

\section{Acknowledgements}
	
	The author would like to thank the anonymous referee for valuable feedback and suggestions regarding the content of this paper. For instance, the referee pointed out the existence of the ring \eqref{eqn:gw-1,2} as well as some discrepancies in the initial version of Section~\ref{sec:dim-least-2}. Further, the author is especially thankful to Craig Huneke for several useful conversations. Additionally, results in this note were inspired by many Macaulay2 \cite{M2} computations.



\def\cprime{$'$}

\end{document}